\def\({\left(}
\def\){\right)}
\def\eb{\varepsilon}
\def\R {\mathbb{R}}
\newcommand{\be}{\begin{equation} }
\newcommand{\ee}{\end{equation} }
\def\ddt{\frac{d}{dt}}
\def \l {\langle}
\def \r {\rangle}
\def \p {\partial}
\def \px {\partial_{x}}
\def \and{\qquad\text{and}\qquad}
\def\Bbb{\mathbb}
\def\Dt{\partial_t}
\def\Dx{\Delta_x}
\def\({\left(}
\def\){\right)}
\def\eb{\varepsilon}
\def\eb{\varepsilon}
\def \px {\partial_{x}}
\def\R {\mathbb{R}}
\def \l {\langle}
\def \r {\rangle}
\def \p {\partial}
\def \and{\qquad\text{and}\qquad}
\def\Bbb{\mathbb}
\def\Dt{\partial_t}
\def\Dx{\Delta}
\newtheorem{proposition}{Proposition}[section]
\newtheorem{theorem}[proposition]{Theorem}
\newtheorem{lemma}[proposition]{Lemma}
\theoremstyle{definition}
\newtheorem{remark}[proposition]{Remark}
\numberwithin{equation}{section}
\def\be{\begin{equation}}
\def\ee{\end{equation}}
\def\bp{\begin{proof}}
\def\ep{\end{proof}}
\def \no#1#2#3 {{\bf #1} (#3), #2.}
\def \eds#1#2#3 {#1, #2, #3.}
\title[Convective Cahn-Hilliard equation]
{Global solvability and blow up for the convective Cahn-Hilliard equations with concave potentials}
\author[]
{A. Eden, V.K. Kalantarov and S.V. Zelik}
\address{(A. Eden) Department of mathematics, Boðaziçi University,
 \newline\indent
Bebek, Istanbul, Turkey}
\address{(V.K.Kalantarov) Department of mathematics,
\newline\indent Ko{\c c} University, Rumelifeneri Yolu, Sariyer, Istanbul, Turkey
}
\address{(S.K.Zelik) Department of mathematics, \newline \indent University of Surrey
Guildford, GU2 7XH, UK}
\keywords{Convective Cahn-Hilliard equations, global existence,
uniform estimates, Kolmogorov - Sivashinsky - Spiegel equation,
sixth order convective CH equations, blow up of solutions}
\begin{document}

\begin{abstract}{We study initial boundary value problems for the  convective Cahn-Hilliard
equation $\Dt u +\px^4u +u\px u+\px^2(|u|^pu)=0$. It is well-known
that without the convective term, the solutions of this equation may
blow up in finite time for any $p>0$.
 In contrast to that, we show that the presence of the convective term $u\px u$
 in the Cahn-Hilliard equation prevents blow up at least
 for $0<p<\frac49$. We also show that the blowing up solutions still exist if $p$ is large enough ($p\ge2$).
 The related equations like Kolmogorov-Sivashinsky-Spiegel equation, sixth order convective Cahn-Hilliard equation, are also considered.}
\end{abstract}

\maketitle

\section{Intoduction}
It is well-known that the solutions of the semilinear heat equations with concave potentials

$$
\Dt u-\Dx u-u|u|^p=0
$$

blow up in finite time if $p>0$ and the initial energy is negative,
see e.g. \cite{AKS,KL,Le,LSU,SGKM} and references therein. However,
it is also established that the presence of the {\it convective}
terms in the semilinear parabolic equation prevents blow up if the
nonlinear source term is not growing very rapidly. For instance, the
solutions of the following convective heat equation
\begin{equation}\label{heat2}
\Dt u+u\partial_x u-\partial_x^2 u-u|u|^p=0
\end{equation}
in a bounded interval $\Omega=[-L,L]$ with Dirichlet boundary
conditions exist globally in time if $p\le1$ and the blowing up
solutions occur only if $p>1$, see \cite{CLS,Lenbc,LPSS,Str}, see
also \cite{Te} for the results on suppressing the blow up by adding
the sufficiently large linear convective terms in reaction-diffusion
equations.
\par
The main aim of the present paper is to study the analogous problems for  the following convective
Cahn-Hilliard (CH) equation with concave potentials
\begin{equation}\label{chintr}
\Dt u+\px^2(\px^2u+u|u|^p)+u\px u=0
\end{equation}
in a bounded segment $\Omega=[-L,L]$ endowed by {\it periodic} boundary conditions.
\par
The long-time behavior of solutions of initial boundary value
problems for  CH and related equations are intensively studied by
many authors, see \cite{BBT,BB,ES,Ka,GW,Po,Tem} and references
therein. For instance, the existence of the blowing up solutions for
equation \eqref{chintr} {\it without} the convective term $u\px u$
is known for any $p>0$, see e.g. \cite{ES,EGW,Nov,Pu}. However,
based on the reaction-diffusion experience mentioned above, one may
expect that the convective term prevents blow-up here as well. We
will show below that this is indeed the case. Namely, the following
theorem can be considered as the main result of the paper.

\begin{theorem}\label{main} Let $0<p<\frac49$. Then, for every initial data $u_0\in L^2(\Omega)$ with zero mean,
problem \eqref{chintr} possesses a unique solution $u(t)$ which exists for all $t\ge0$ and remains bounded when $t\to\infty$.
\par
Let $p\ge2$. Then, there are initial data $u_0\in L^2(\Omega)$ with zero mean such that the corresponding solution blows up in finite time.
\end{theorem}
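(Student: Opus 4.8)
To establish the second (blow-up) assertion I would rely on the concavity method, in the spirit of Levine and Kalantarov, realized in the $H^{-1}$-metric natural for the Cahn–Hilliard structure. Since $u$ has zero mean and is periodic, $\A:=-\px^2$ is invertible on the mean-zero subspace and I set $\|u\|_{-1}^2:=\langle \A^{-1}u,u\rangle=\|\px^{-1}u\|_{L^2(\Om)}^2$. Writing \eqref{chintr} as $\Dt u=\px^2\mu-u\px u$ with chemical potential $\mu=-\px^2u-u|u|^p$, the associated energy is $E(u)=\tfrac12\|\px u\|_{L^2(\Om)}^2-\tfrac1{p+2}\|u\|_{L^{p+2}(\Om)}^{p+2}$. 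First I would record the two algebraic facts that make the method feasible: the Burgers term is conservative in $L^2$, $\langle u,u\px u\rangle=\tfrac13\int_\Om\px(u^3)=0$, while in the energy and in the $H^{-1}$-functional it contributes only the genuinely lower-order quantities $-\tfrac12\int_\Om(\px u)^3$ and $\C(u):=\tfrac12\int_\Om(\px^{-1}u)\,u^2$ respectively.

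Differentiating then yields the two identities $\tfrac12\tfrac{d}{dt}\|u\|_{-1}^2=-\langle u,\mu\rangle-\C(u)=-2E(u)+\tfrac{p}{p+2}\|u\|_{L^{p+2}(\Om)}^{p+2}-\C(u)$ and $\tfrac{d}{dt}E(u)=-\|\px\mu\|_{L^2(\Om)}^2-\tfrac12\int_\Om(\px u)^3$. The first identity already exhibits the mechanism: on the set $\{E\le0\}$ the driving term $\tfrac{p}{p+2}\|u\|_{L^{p+2}(\Om)}^{p+2}$ is nonnegative and, being of homogeneity $p+2>3$, it should dominate the cubic convective correction $\C(u)$. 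I would then choose initial data with sufficiently negative energy, which exist because for $u_0=a\varphi$ with $\varphi$ a fixed mean-zero profile one has $E(a\varphi)=\tfrac{a^2}{2}\|\px\varphi\|_{L^2(\Om)}^2-\tfrac{a^{p+2}}{p+2}\|\varphi\|_{L^{p+2}(\Om)}^{p+2}\to-\infty$ as $a\to\infty$.

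The plan is to feed the functional $\Phi(t)=\|u(t)\|_{-1}^2$, or rather the Levine auxiliary functional $\int_0^t\|u\|_{-1}^2\,ds+(T-t)\|u_0\|_{-1}^2+b(t+\tau)^2$, into the generalized concavity lemma of Kalantarov–Ladyzhenskaya, which tolerates lower-order perturbations: if $\Phi\Phi''-(1+\alpha)(\Phi')^2\ge-C_1\Phi\Phi'-C_2\Phi^2$ with $\alpha>0$ and the initial data large enough, then $\Phi$ becomes infinite in finite time. Verifying this inequality reduces to the Cauchy–Schwarz estimate $(\Phi')^2\le4\|u\|_{-1}^2\|\Dt u\|_{-1}^2$ combined with the two identities above, the constants $C_1,C_2$ absorbing the contributions of $\int_\Om(\px u)^3$ and $\C(u)$.

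The main obstacle is precisely the non-variational convective term, which destroys the monotonicity of $E$ and the exact concavity one would have for the pure Cahn–Hilliard flow; the whole argument hinges on showing that, along the flow, the energy remains negative and the convective corrections stay dominated by the nonlinear source. Here I would use the one-dimensional interpolation bounds $|\int_\Om(\px^{-1}u)u^2|\le C\|u\|_{-1}^{1/2}\|u\|_{L^2(\Om)}^{5/2}$ and $|\int_\Om(\px u)^3|\le\|\px u\|_{L^\infty(\Om)}\|\px u\|_{L^2(\Om)}^2$, together with Young's inequality, to absorb the cubic terms into $\|u\|_{L^{p+2}(\Om)}^{p+2}$ and $\|\px\mu\|_{L^2(\Om)}^2$. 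It is exactly the demand that the homogeneity $p+2$ of the source beat these cubic convective losses after interpolation, leaving enough margin both to keep $E$ negative and to close the differential inequality with a strictly positive $\alpha$, that forces the restriction $p\ge2$; the global-existence threshold $p<\tfrac49$ of the first part confirms that some such restriction is unavoidable.
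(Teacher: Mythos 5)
Your proposal only addresses the blow-up half of the theorem. The first assertion (global existence and boundedness for $0<p<\frac49$) is not touched at all, and it is the harder half: the paper proves it by multiplying the equation by $u-\phi_N$, where $\phi_N$ is the Bronski--Gambill auxiliary function of Lemma \ref{Bronski} chosen with $N$ depending on the size of the data (combined with the Goodman trick of minimizing over shifts $\phi(\cdot+s)$ for non-odd data), and then closing the estimate with the Gronwall lemma with parameter (Lemma \ref{GrZel}); it is this last step that produces the threshold $\frac{3p+4}{4-p}$-versus-$\frac12$ condition, i.e.\ $p<\frac49$. Nothing in your text replaces this, so as a proof of the stated theorem the proposal is at best half complete.

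For the blow-up half your overall strategy (Levine concavity in the $H^{-1}$ metric) is the right family of ideas, but the way you handle the convective term leaves the central difficulty unresolved. You differentiate the energy in the form $\frac{d}{dt}E=-\|\px\mu\|^2-\frac12\int_\Om(\px u)^3$ and then hope to absorb $\int_\Om(\px u)^3$ via $\|\px u\|_{L^\infty}\|\px u\|_{L^2}^2$; but $\|\px u\|_{L^\infty}$ is not controlled by the quantities available in the concavity scheme ($\|u\|_{-1}$, $\|u\|_{L^{p+2}}$, $\|\px\mu\|$), and the sign-indefinite cubic-in-$\px u$ term can destroy the monotonicity of $E$ that the method needs. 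You correctly identify this as ``the main obstacle'' but do not overcome it. The paper avoids the term entirely: writing the energy identity by testing the $P$-form of the equation with $\Dt v$, the convective contribution appears as $(P(v\px v),\Dt v)$, and since $v\px v=\frac12\px(v^2)$ one has $\|P^{1/2}(v\px v)\|\le d_0\|v^2\|_{L^2}=d_0\|v\|_{L^4}^2$, so it is absorbed by $\eb\|P^{1/2}\Dt v\|^2$ plus a multiple of $(v^4,1)$ --- exactly the potential term (this is also where $p\ge2$, i.e.\ the nonlinearity at least quartic in the energy, enters concretely). The remaining sign problem is fixed by the substitution $u=e^{\lambda t}v$ with $\lambda=\frac12+3d_0^2$, which creates the damping term $\lambda Pv$ and the helpful coefficient $\frac\lambda2 e^{2\lambda t}(v^4,1)$ in $\frac{d}{dt}E$; after that the clean inequality $\Psi''\Psi-\frac54(\Psi')^2\ge0$ holds and the plain Levine lemma applies, with no need for the perturbed concavity inequality $\Phi\Phi''-(1+\alpha)(\Phi')^2\ge-C_1\Phi\Phi'-C_2\Phi^2$ whose hypotheses you never verify. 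To repair your argument you would need either to import this exponential rescaling and the $\|P^{1/2}\px w\|=\|w\|$ identity for the convective term, or to supply a genuine proof that $\int_\Om(\px u)^3$ and $\C(u)$ are dominated along the flow --- as written, that step would fail.
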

Note that, in contrast to the situation with the semilinear heat
equations, the result of Theorem \ref{main} is not complete in the
sense that we know nothing about the behavior of solutions for
$\frac49\le p<2$. Indeed, the sharp result for the semilinear heat
equations stated above is strongly based on the maximum principle
which we do not have for the CH equations, so we have to use
alternative less powerful methods. We hope to return to this problem
somewhere else.
\par
The paper is organized as follows.
\par
In Section \ref{s1}, we introduce the notations and main technical tools which are necessary for our proof of Theorem \ref{main}.
\par
Section \ref{s2} is devoted to the proof of the blow up for the case $p\ge2$. Actually, in the non-convective case,
sufficient conditions of blow up of solutions for the CH equation can be  established by concavity method of Levine (see \cite{Le})
rewriting the equation in the form  of
a nonlinear differential operator equations  of the form
\begin{equation}\label{DO}
Pu_t + Au =F(u)
\end{equation}
 in a Hilbert space (see \cite{Ka}). Here $P,A$ are positive self-adjoint operators and $F(\cdot)$ is a nonlinear gradient operator .
 But the convective CH equation \eqref{chintr} can not be written in the form \eqref{DO}, so the method does not work directly and
 its adaptation to our situation  requires some more delicate arguments and works only under the extra assumption $p\ge2$.
\par
In Section \ref{s3}, we prove the global existence and dissipativity of solutions of \eqref{chintr} in the case $0<p<\frac49$.
Our proof uses the so-called Goodmann trick (see \cite{G}) which is now-a-days the standard
(for the theory of Kuramoto-Sivashinki equation)
 method to "extract" the dissipation from the convective term. However, in order to compensate the concave term $u|u|^p$,
 we need to chose the auxiliary function in this method depending on the size of the initial data and then use the so-called
 Gronwall lemma with parameter (see \cite{GPZ} and also \cite{Z}, where this lemma was implicitly used to establish the global
 existence of solutions for the Navier-Stokes equations in a strip).
 Note that our approach also essentially employs the improved construction of the auxiliary function given in \cite{BG}.
\par
Finally, in Section \ref{s4}, we consider some related problems
which can be straightforwardly solved using the methods developed in
the paper. In particular, we study here the problem of obtaining
uniform in $\delta\to0$ upper bounds for the attractor of the
so-called Kolmogorov-Sivashinsky-Spiegel equation
\begin{equation}\label{kss1}
\Dt u +\px^4u+\px^2(2u-\delta u^{3})+u\px u=0.
\end{equation}

Furthermore, the so-called  sixth order convective Cahn-Hilliard equation
\be\label{sx1intr} \Dt
u -\px^4(\px^{2}u+u-u^3)+ u\px u=0.
 \ee

are considered there.

\section{Notations and preliminaries}\label{s1}
In this section, we introduce the notations which will be used
throughout the paper and introduce some technical tools important
for what follows.
\par
As usual, we denote by $H^m=H^m(-L,L)$  the Sobolev space of distributions whose derivatives up to order $m$ belong to $L^2(-L,L)$.
We write $H=H(-L,L)$ instead of $H^0=L^2(-L,L)$ and $(\cdot,\cdot)$ stands for the usual scalar product in the Hilbert space $H$.
 \par
 The closure of $C^\infty_0(-L,L)$ in $H^m(-L,L)$ will be denoted by $H^m_0=H^m_0(-L,L)$ and $H^m_{per}=H^m_{per}(-L,L)$ stands
 for the subspace of $H^m$ which consists of $2L$ periodic functions. This definition works only for $m\in\Bbb N$,
  for negative or/and fractional $m$s, the corresponding Sobolev spaces are defined in a standard way using the duality
  and interpolation arguments respectively, see e.g. \cite{Tem}.
\par
For every $u\in H^m_{per}$, $m\in\Bbb N$, we introduce the mean
value operator
$$
\l u\r:=\frac1{2L}\int_{-L}^Lu(x)\,dx
$$
and denote by $\dot H^m_{per}=\dot H^m_{per}(-L,L)$ the subspace of $H^m_{per}$ which consists of functions with zero mean:
$$
\dot H^m_{per}:=\{u\in H^m_{per},\ \l u\r=0\}.
$$
We also introduce the inverse of the Laplace operator
$P:=(-\px^2)^{-1}$ defined on the functions with zero mean. It is
well known that this operator gives the isomorphism between the
spaces $\dot H^m_{per}$ and $\dot H^{m+2}_{per}$ for every $m\in\Bbb
N$:
$$
P: \dot H^m_{per}\to\dot H^{m+2}_{per},\ \ P\dot H^m_{per}=\dot H^{m+2}_{per}
$$
and the following relations hold for all $u\in \dot H^1$:

 \be\label{Po1} \|u
\|_H\leq \bar d_0\|\px u\|_H, \ \ \|P^{\frac12}(\px u)\|_H = \|u\|_H, \
\bar d_0:=\frac L\pi. \ee
Applying the operator $P$ to the both sides of equation
\eqref{chintr}, we rewrite it in the equivalent, but more convenient
(for our purposes) form
\begin{equation}\label{cch-not}
P\Dt u-\px^2 u-P(u\px u)=u|u|^p-\l u|u|^p\r.
\end{equation}
We say that a function $u(t,x)$ is a weak solution of \eqref{chintr} (or equivalently of \eqref{cch-not})
 on the time interval $t\in[0,T]$ if
\begin{equation}\label{smo}
u\in C([0,T],\dot H_{per})\cap L^2([0,T],\dot H^2_{per}),\ \ u\in L^{p+1}([0,T],L^{p+1}(-L,L))
\end{equation}
and equation \eqref{cch-not} is satisfied in the sense of distributions.
\par
The following theorem gives the {\it local} well-posedness of this problem.
\begin{theorem}\label{Th.local} Let $0<p<4$. Then, for every $u_0\in\dot H_{per}$, the problem \eqref{cch-not} possesses a unique
weak solution defined on the time interval $t\in[0,T]$, where $T=T(u_0)>0$ depends on the $H$-norm of the initial data $u_0$
\end{theorem}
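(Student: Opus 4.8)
The plan is to construct the solution by a Galerkin scheme in the Fourier basis (the eigenfunctions of $P=(-\px^2)^{-1}$ on $\dot H_{per}$), to derive a local-in-time a priori estimate depending only on $\|u_0\|_H$, to pass to the limit by compactness, and finally to prove uniqueness by a separate energy estimate on the difference of two solutions. All the analytic content sits in one basic energy estimate: working at the Galerkin level (where everything is smooth) and testing \eqref{chintr} in its original form $\Dt u+\px^4 u+\px^2(u|u|^p)+u\px u=0$ against $u$ in $H$, the convective term drops out because $(u\px u,u)=\tfrac13\int_\Omega\px(u^3)\,dx=0$ by periodicity, while two integrations by parts turn the concave term into $(u|u|^p,\px^2u)$. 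This yields
\begin{equation}\label{loc.en}
\tfrac12\ddt\|u\|_H^2+\|\px^2u\|_H^2=-(u|u|^p,\px^2u)\le\|u\|_{L^{2p+2}}^{p+1}\,\|\px^2u\|_H.
\end{equation}

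The heart of the matter is to absorb the right-hand side of \eqref{loc.en} into the dissipative term $\|\px^2u\|_H^2$. By the one-dimensional Gagliardo--Nirenberg inequality (using $\l u\r=0$ and the Poincar\'e inequality \eqref{Po1}) one has $\|u\|_{L^{2p+2}}\le C\|\px^2u\|_H^{\theta}\|u\|_H^{1-\theta}$ with $\theta=\frac{p}{4(p+1)}$, hence $\|u\|_{L^{2p+2}}^{p+1}\le C\|\px^2u\|_H^{p/4}\|u\|_H^{1+3p/4}$. Substituting into \eqref{loc.en} produces a factor $\|\px^2u\|_H^{1+p/4}$ on the right, and \emph{precisely because} $0<p<4$ the exponent $1+p/4$ is strictly less than $2$; Young's inequality then absorbs half of $\|\px^2u\|_H^2$ and leaves
\begin{equation}\label{loc.ode}
\ddt\|u\|_H^2+\|\px^2u\|_H^2\le C\|u\|_H^{\gamma},\qquad \gamma=\frac{8+6p}{4-p}>2.
\end{equation}
Comparison with the scalar ODE $y'=Cy^{\gamma/2}$ (with $\gamma/2>1$) shows that $y(t)=\|u(t)\|_H^2$ stays bounded on an interval $[0,T]$ whose length $T=T(\|u_0\|_H)>0$ depends only on $\|u_0\|_H$; integrating \eqref{loc.ode} over $[0,T]$ then bounds $u$ in $L^2([0,T],\dot H^2_{per})$. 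Both bounds are uniform along the Galerkin approximations.

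Since these estimates are uniform in the Galerkin index, the approximations are bounded in $L^\infty([0,T],\dot H_{per})\cap L^2([0,T],\dot H^2_{per})$, and reading $\Dt u$ off from \eqref{cch-not} (equivalently, $\Dt u=-\px^4u+u\px u-\px^2(u|u|^p)$) bounds $\Dt u$ in $L^2([0,T],\dot H^{-2}_{per})$, the nonlinear terms being controlled through the already established bounds together with the one-dimensional embedding $H^2\hookrightarrow L^\infty$. By the Aubin--Lions--Simon lemma the approximations are relatively compact in $L^2([0,T],\dot H^{2-\eb}_{per})$ and in $C([0,T],\dot H^{-\eb}_{per})$, which suffices to pass to the limit in the quadratic term $u\px u$ and in the continuous nonlinearity $u|u|^p$. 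Finally, the membership $u\in L^{p+1}([0,T],L^{p+1})$ required by \eqref{smo} follows from the same Gagliardo--Nirenberg bound: $\int_0^T\|u\|_{L^{2p+2}}^{p+1}\,dt\le C\|u\|_{L^\infty([0,T],H)}^{1+3p/4}\int_0^T\|\px^2u\|_H^{p/4}\,dt<\infty$, since $p/4<2$ and $\px^2u\in L^2([0,T],H)$; this covers the whole range $0<p<4$.

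For uniqueness, let $u,v$ be two weak solutions with the same data and set $w=u-v$. Subtracting the two copies of \eqref{chintr} and testing with $w$ in $H$ gives
\begin{equation}\label{uniq}
\tfrac12\ddt\|w\|_H^2+\|\px^2w\|_H^2=-(u|u|^p-v|v|^p,\px^2w)-(u\px u-v\px v,w).
\end{equation}
The convective difference is $u\px u-v\px v=\tfrac12\px((u+v)w)$, so one integration by parts bounds the last term by $\tfrac12\|u+v\|_{L^\infty}\|w\|_H\|\px w\|_H$; interpolating $\|\px w\|_H\le\|w\|_H^{1/2}\|\px^2w\|_H^{1/2}$ and applying Young leaves $\tfrac14\|\px^2w\|_H^2$ plus $C\|u+v\|_{L^\infty}^{4/3}\|w\|_H^2$, with coefficient in $L^{3/2}([0,T])\subset L^1([0,T])$ because $u,v\in L^2([0,T],H^2)\hookrightarrow L^2([0,T],L^\infty)$. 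For the concave difference we use the pointwise Lipschitz-type bound $|\,u|u|^p-v|v|^p|\le C(|u|^p+|v|^p)|w|$, so that $(u|u|^p-v|v|^p,\px^2w)\le C(\|u\|_{L^\infty}^p+\|v\|_{L^\infty}^p)\|w\|_H\|\px^2w\|_H$, and Young again yields $\tfrac14\|\px^2w\|_H^2$ plus $C(\|u\|_{L^\infty}^{2p}+\|v\|_{L^\infty}^{2p})\|w\|_H^2$. Here the condition $0<p<4$ reappears exactly: by $\|u\|_{L^\infty}\le C\|\px^2u\|_H^{1/4}\|u\|_H^{3/4}$ the coefficient lies in $L^1([0,T])$ iff $p/2<2$. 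Gronwall's inequality with these integrable coefficients then forces $w\equiv0$. The main obstacle is this last estimate: the nonlinearity $u|u|^p$ is only H\"older (not Lipschitz) for $p<1$, so one cannot differentiate it, and producing an $L^1$-in-time coefficient while keeping the dissipation on the left is the one genuinely delicate computation, which is again what pins down the threshold $p<4$.
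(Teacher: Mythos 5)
The paper offers no proof of Theorem \ref{Th.local} --- it is explicitly declared standard and omitted --- and your Galerkin/Gagliardo--Nirenberg/Aubin--Lions/Gronwall argument is a correct and complete instantiation of exactly that standard proof: the exponent arithmetic ($\theta=\tfrac{p}{4(p+1)}$, the absorbed power $1+p/4<2$, $\gamma=\tfrac{8+6p}{4-p}$) checks out, and the threshold $p<4$ enters precisely where the paper's own remark locates it, namely in guaranteeing that $u|u|^p$ lies in $L^2([0,T],H)$ and is subordinate to the linear terms. The only cosmetic difference is that you test the original equation \eqref{chintr} against $u$ rather than the $P$-transformed form \eqref{cch-not} against $u$ in the $\dot H^{-1}$-type energy; both routes are equivalent here and yours is perfectly adequate.
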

The proof of this theorem is standard and, by this reason, is omitted.
\begin{remark} As follows from the embedding theorem,
$$
u\in C([0,T],\dot H_{per})\cap L^2([0,T],\dot H^2_{per})
$$
implies that $u\in L^{10}([0,T],L^{10})$. Therefore, the assumption
$p<4$ guarantees that the nonlinear term  $u|u|^p$ belongs to
$L^2([0,T],H)$ and therefore it is subordinated to the linear terms
in the equation.
\par
Let us mention also that the usual parabolic smoothing property
works for such weak local solutions, so the factual smoothness of
the solution $u(t)$ for $t>0$ is restricted only by the smoothness
of the non-linearity $u|u|^p$ at $u=0$. In particular, since this
nonlinearity is of at least $C^{1+\alpha}$ for some $\alpha>0$
(depending on $p$), one can show that we have at least $u(t)\in
C^3(-L,L)$ for $t\in(0,T]$ and any weak solution constructed in
Theorem \ref{Th.local}. Since this regularity is more than enough to
justify all  estimates used in the paper, we will not return to the
questions of {\it local} well-posedness in what follows and will
only concentrate ourselves on derivation of the a priori estimates
which guarantee the {\it global} well-posedness or finite-time blow
up of solutions.
\par
In the case $p\ge4$, one can obtain the similar local well-posedness
result just using more regular solutions, say,
$$
u\in C([0,T],\dot H^1_{per})\cap L^2([0,T],\dot H^3_{per})
$$
and starting from more regular initial data $u_0\in\dot H^1_{per}$.
\end{remark}
 We conclude this section by stating  two crucial lemmas, one of them will allow us to
  find sufficient conditions for blow up of solutions of the CH equation \eqref{chintr} and the other
  one will give the part of Theorem \ref{main} related with the global solvability.\\
\begin{lemma}\label{Lev1} (\cite{Le})
Let  $\Psi(t)$ be twice continuously differentiable function that
satisfies the inequality \be\label{01} \Psi''(t)
\Psi(t)-(1+\alpha)\left[\Psi(t)\right]^2\geq 0, \ t>0, \ee and \be\label{001}
\Psi(0)>0, \Psi'(0)>0, \ee where $\alpha
>0$ is a given number. Then there exists $$t_1 \leq T_1=\frac{\Psi(0)}{\alpha \Psi'(0)}$$ such that
$$
\Psi(t)\rightarrow \infty \ \ \mbox{as} \ \  t\rightarrow t_1^{-}.
$$
\end{lemma}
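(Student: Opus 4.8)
The plan is to use the classical concavity substitution that converts the quadratic differential inequality \eqref{01} into a statement about a single \emph{concave} auxiliary function. First I would set $\Phi(t):=\Psi(t)^{-\alpha}$, which is well defined as long as $\Psi(t)>0$; by \eqref{001} we have $\Phi(0)=\Psi(0)^{-\alpha}>0$ and $\Phi'(0)=-\alpha\,\Psi(0)^{-\alpha-1}\Psi'(0)<0$. Differentiating twice gives
\be
\Phi''(t)=-\alpha\,\Psi(t)^{-\alpha-2}\big(\Psi(t)\Psi''(t)-(1+\alpha)[\Psi'(t)]^2\big),
\ee
so the hypothesis \eqref{01}, together with $\alpha>0$ and $\Psi>0$, forces $\Phi''(t)\le 0$. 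Thus $\Phi$ is concave on every interval on which it remains defined.

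The second step is to check that $\Psi$ stays strictly positive up to the blow-up time, so that the substitution remains legitimate. Since $\Phi$ is concave, $\Phi'$ is nonincreasing, hence $\Phi'(t)\le\Phi'(0)<0$ for all admissible $t$; therefore $\Phi$ is strictly decreasing and $\Psi=\Phi^{-1/\alpha}$ is strictly increasing, so $\Psi(t)\ge\Psi(0)>0$ and the expression for $\Phi$ never degenerates through $\Psi$ touching zero. Consequently $\Phi$ stays concave, positive, and decreasing, and the tangent-line estimate for concave functions gives
\be
0<\Phi(t)\le\Phi(0)+\Phi'(0)\,t=\Psi(0)^{-\alpha}\Big(1-\alpha\,\frac{\Psi'(0)}{\Psi(0)}\,t\Big).
\ee

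Finally, the right-hand side above vanishes at $t=T_1=\Psi(0)/(\alpha\Psi'(0))$ and becomes negative afterwards, while $\Phi$ itself is nonnegative wherever it is defined; hence $\Phi$ must reach the value $0$ at some $t_1\le T_1$. Since $\Psi(t)=\Phi(t)^{-1/\alpha}$, the limit $\Phi(t)\to 0^{+}$ as $t\to t_1^{-}$ is exactly $\Psi(t)\to\infty$, which is the desired conclusion.

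I expect the only genuinely delicate point to be the middle step, namely guaranteeing that $\Psi$ remains strictly positive (equivalently, that $\Phi$ neither changes sign nor blows up) before the first zero of $\Phi$. This is where a naive argument could break down, but here it is handled cleanly by the monotonicity extracted from concavity, so no maximum-principle-type input is required; the differentiations and the tangent-line bound are then routine.
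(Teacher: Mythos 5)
Your proof is correct and is exactly the classical concavity argument of Levine that the paper invokes by citation (the paper itself gives no proof of this lemma, referring to \cite{Le}, where the argument is precisely the substitution $\Phi=\Psi^{-\alpha}$, the verification that $\Phi''\le 0$, and the tangent-line bound forcing $\Phi$ to vanish by $T_1=\Psi(0)/(\alpha\Psi'(0))$). Note that you have silently, and correctly, repaired the obvious typo in the statement: the hypothesis \eqref{01} should read $\Psi''(t)\Psi(t)-(1+\alpha)\left[\Psi'(t)\right]^2\ge 0$, which is the form your computation of $\Phi''$ actually uses and the form in which the lemma is applied in Section \ref{s2}.
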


\begin{lemma}\label{GrZel}(\cite{GPZ,Pata}) Suppose that $\alpha >\beta\geq 1$ and
 $\gamma\geq 0$ are given numbers that  satisfy the inequality,
\begin{equation}\label{1.cond}
\frac{\beta -1}{\alpha-1}<\frac1{\gamma+1},
\end{equation}
Suppose also that $\Psi$ is a non-negative absolutely continuous
function on $[0,\infty)$ which satisfies, for some numbers $K\geq 0,
\eb_0 >0$, $M>0$ and for every $\eb \in (0,\eb_0)$  the differential
inequality
\begin{equation}\label{Gron-par}
\Psi'(t)+\eb \Psi(t)\leq K\eb^\alpha\left[\Psi(t)\right]^\beta
+M\eb^{-\gamma}
\end{equation}
Then there exist a monotone function $Q:\R_+\to\R_+$ and a positive
number $\kappa>0$ such that
\begin{equation}\label{Gron-dis}
\Psi(t)\leq Q(\Psi(0))e^{-\kappa t}+Q(M).
\end{equation}
Moreover, the dissipative estimate \eqref{Gron-dis} remains true if the function $\Psi\ge0$ is only continuous and
satisfies the integrated version of inequality \eqref{Gron-par}
\begin{equation}\label{Gron-par1}
\Psi(t)\le \Psi(0)e^{-\eb t}+K\eb^\alpha\int_0^te^{-\eb(t-s)}[\Psi(s)]^\beta\,ds+M\eb^{-\gamma-1}
\end{equation}
for all $t\ge0$.
\end{lemma}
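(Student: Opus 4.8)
The plan is to run a barrier (continuity) argument on the differential inequality \eqref{Gron-par}, tuning the free parameter $\eb$ to the size of the data, and then to upgrade the resulting parameter-dependent bound to the genuine dissipative estimate \eqref{Gron-dis} by restarting the argument as $\Psi$ decreases. I treat the differential form first; the integrated form \eqref{Gron-par1} is then handled by the same comparison, since \eqref{Gron-par1} already encodes the result of integrating \eqref{Gron-par} on the region where the nonlinearity is dominated.

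First I would absorb the nonlinear term into the linear dissipation. Writing $a:=\frac{\alpha-1}{\beta-1}$ (I assume $\beta>1$; for $\beta=1$ the nonlinearity is already linear and the argument simplifies), the bound $K\eb^{\alpha}[\Psi]^{\beta}\le\frac{\eb}{2}\Psi$ holds exactly when $\Psi\le B(\eb):=(2K)^{-1/(\beta-1)}\eb^{-a}$. Hence, as long as $\Psi(t)\le B(\eb)$, inequality \eqref{Gron-par} reduces to the linear one $\Psi'(t)+\frac{\eb}{2}\Psi(t)\le M\eb^{-\gamma}$, whose integration gives $\Psi(t)\le\Psi(0)e^{-\eb t/2}+2M\eb^{-\gamma-1}$ (call this bound $(\star)$).

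Next comes the step where hypothesis \eqref{1.cond} is used. Condition \eqref{1.cond} is equivalent to $a>\gamma+1$, so the ratio $2M\eb^{-\gamma-1}/B(\eb)$ equals $c\,M\,\eb^{\,a-\gamma-1}$ for some constant $c>0$ carrying a positive power of $\eb$; thus for all small $\eb$ the floor $2M\eb^{-\gamma-1}$ lies far below the barrier $B(\eb)$. Given this, I would pick $\eb=\eb(\Psi(0),M)$ small enough that both $\Psi(0)\le\frac14 B(\eb)$ and $2M\eb^{-\gamma-1}\le\frac14 B(\eb)$. A continuity/bootstrap argument then shows $\Psi(t)<B(\eb)$ for all $t\ge0$: on the closed set where $\Psi\le B(\eb)$ the bound $(\star)$ forces $\Psi\le\frac12 B(\eb)$, so the barrier can never be attained. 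Therefore $(\star)$ holds for every $t\ge0$.

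Finally I would turn $(\star)$ into \eqref{Gron-dis}; this is the main obstacle. The admissible $\eb$ --- and with it both the decay rate $\eb/2$ and the floor $2M\eb^{-\gamma-1}$ --- must be chosen small when $\Psi(0)$ is large, so a single use of $(\star)$ yields neither a uniform rate $\kappa$ nor an $M$-only floor. To remove the $\Psi(0)$-dependence I would fix once and for all a parameter $\bar\eb=\bar\eb(M,K)$, independent of $\Psi(0)$, with $2M\bar\eb^{-\gamma-1}\le\frac14 B(\bar\eb)=:R_0$, and set $Q(M):=R_0+2M\bar\eb^{-\gamma-1}$. Whenever $\Psi(\tau)\le R_0$, applying $(\star)$ from time $\tau$ with parameter $\bar\eb$ keeps $\Psi$ in the fixed absorbing ball $\{\Psi\le Q(M)\}$ for all later times. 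It remains to reach $R_0$ in finite time: since $a>\gamma+1$, the floor delivered by $(\star)$ is sublinear in the current size of $\Psi$, so restarting $(\star)$ with $\eb$ re-tuned to the shrinking value of $\Psi$ drives $\Psi$ below $R_0$ after a finite time $T_0=T_0(\Psi(0))$, while $\Psi$ stays bounded by a monotone $Q_1(\Psi(0))$ on $[0,T_0]$. Putting $\kappa:=\bar\eb/2$ and $Q(\Psi(0)):=Q_1(\Psi(0))e^{\kappa T_0(\Psi(0))}$ and combining the two regimes gives \eqref{Gron-dis}; the integrated case follows identically from the bound read off \eqref{Gron-par1}.
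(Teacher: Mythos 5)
The paper does not prove Lemma \ref{GrZel}: it is quoted without proof from the cited references \cite{GPZ,Pata}, so there is no internal argument to compare against. Your proposal is, in substance, the standard proof of this lemma as given in those references: absorb the nonlinearity below the barrier $B(\eb)\sim \eb^{-(\alpha-1)/(\beta-1)}$, observe that condition \eqref{1.cond} is exactly the statement that the forcing floor $M\eb^{-\gamma-1}$ lies asymptotically below $B(\eb)$, run the continuity argument with $\eb$ tuned to $\Psi(0)$, and then iterate with re-tuned $\eb$ to enter an absorbing set depending only on $M$, from which a fixed $\bar\eb$ gives the uniform rate $\kappa$. This is correct. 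Two small points deserve tightening if you write it out in full. First, the re-tuning iteration $R_{n+1}\approx C\,R_0^{1-\theta}R_n^{\theta}$ (with $\theta=(\gamma+1)(\beta-1)/(\alpha-1)<1$) has its fixed point at a constant \emph{multiple} of $R_0$, not below $R_0$; so either enlarge the target ball to $C'R_0$ and rerun the final barrier step from there with a correspondingly smaller $\bar\eb$ (still depending only on $M$, $K$), or define $R_0$ from the outset as that fixed point. Second, for the integrated form \eqref{Gron-par1} the reduction on the sublevel set $\{\Psi\le B(\eb)\}$ is not a linear ODE but a linear Volterra inequality, $\Psi(t)\le \Psi(0)e^{-\eb t}+\tfrac{\eb}{2}\int_0^te^{-\eb(t-s)}\Psi(s)\,ds+M\eb^{-\gamma-1}$; an integral Gronwall step then yields the same bound $\Psi(t)\le\Psi(0)e^{-\eb t/2}+2M\eb^{-\gamma-1}$, so the rest of your argument goes through verbatim. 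Neither point is a conceptual gap.
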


\section{Blow up of solutions to convective CH
equations}\label{s2}

In this section we consider the following problem in $\Omega:=[-L,L]$:
$$
\begin{cases}\Dt u+\px^2(\px^2 u +u^3)+u\px u=0,\\
u\big|_{t=0}=u_0.
\end{cases}\eqno(A)
$$
endowed by periodic boundary conditions. According to Theorem \ref{Th.local}, this problem has
a unique local weak solution for all $u_0\in \dot H_{per}$ and it is equivalent to the following one:
$$
\begin{cases}
P\Dt u-\px^2 u+P(u\px u)=u^3-\l u^3\r,\\
u\big|_{t=0}=u_0,
\end{cases}
\eqno(A_1)
$$
see Section \ref{s2}.
\par
 Our aim is to show that for some
class of initial functions the corresponding solutions blow up in a
finite time. For simplicity, we restrict ourselves to consider only
the case $p=3$ in equation \eqref{chintr}, although as it is not
difficult to see, the similar arguments work for all $p\ge3$.
\par
 The main result of the section is the following theorem.
\begin{theorem}\label{gn} Suppose that $u$ is a solution of the problem $(A_1)$ corresponding to the initial
data $u_0\in\dot  H_{per}$ which is not equal zero identically and
\begin{equation}
E_0:=-\frac{\lambda}{2}\|P^{1/2} u_0\|^2-\frac12\|\px
u_0\|^2+\frac14(u_0^4,1)\geq 0,
\end{equation}
where $\lambda=\frac12+3d_0^2$ and
$d_0:=\max\{\bar d_0,1\}$, see   \eqref{Po1}.
\par
Then there exists $t_1<\infty$ such that
$$
\| P^{1/2}u(t)\|\rightarrow \infty, \ \ {as} \ \ t\rightarrow t_1^-.
$$
\end{theorem}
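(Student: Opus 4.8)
The plan is to produce the singularity via Levine's concavity Lemma \ref{Lev1}. Because $(A_1)$ is first order in time, the bare quantity $\|P^{1/2}u\|^2$ does not satisfy a clean inequality of type \eqref{01} (its second derivative generates an uncontrolled $\ddt(u^4,1)$), so I would apply Lemma \ref{Lev1} to the time--averaged functional
\[
\Psi(t)=\int_0^t\|P^{1/2}u(s)\|^2\,ds+(T-t)\|P^{1/2}u_0\|^2+\beta(t+\tau)^2,
\]
with parameters $\beta,\tau>0$ and $T$ large, fixed at the end. Then $\Psi(0)=T\|P^{1/2}u_0\|^2+\beta\tau^2>0$ and $\Psi'(0)=2\beta\tau>0$, so \eqref{001} holds automatically and the entire problem is the concavity inequality \eqref{01}.

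First I would record the identities coming from $(A_1)$. Testing with $u$ gives $\tfrac12\ddt\|P^{1/2}u\|^2=(P\Dt u,u)=-\|\px u\|^2-(P(u\px u),u)+(u^4,1)$, while testing with $\Dt u$ gives the dissipation relation $\|P^{1/2}\Dt u\|^2=-E'(t)-(P(u\px u),\Dt u)$, where $E:=\tfrac12\|\px u\|^2-\tfrac14(u^4,1)$. The convective contribution $(P(u\px u),\Dt u)$ is the term that destroys the monotonicity of $E$ available in the non--convective (gradient) case. The correct bookkeeping uses the \emph{shifted} energy $\tilde E:=E+\tfrac{\lambda}{2}\|P^{1/2}u\|^2$, since the hypothesis $E_0\ge0$ is exactly $\tilde E(0)\le0$.

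The engine of Lemma \ref{Lev1} is Cauchy--Schwarz in time: with $\xi=P^{1/2}u$ one has $\big(\int_0^t(\ps\xi,\xi)\,ds+\beta(t+\tau)\big)^2\le\big(\int_0^t\|\xi\|^2\,ds+\beta(t+\tau)^2\big)\big(\int_0^t\|\ps\xi\|^2\,ds+\beta\big)$, whence $(\Psi')^2\le4\Psi\big(\beta+\int_0^t\|P^{1/2}\ps u\|^2\,ds\big)$ for $t\le T$. Since $\Psi''=\ddt\|P^{1/2}u\|^2+2\beta$, the inequality \eqref{01} reduces to the pointwise bound
\[
\ddt\|P^{1/2}u\|^2\ge4(1+\alpha)\int_0^t\|P^{1/2}\ps u\|^2\,ds+[4(1+\alpha)-2]\beta .
\]
Here I would substitute the two identities, expressing $\int_0^t\|P^{1/2}\ps u\|^2\,ds$ through $\tilde E(0)-\tilde E(t)$ (plus a convective time--integral). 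The point of the $\tfrac{\lambda}{2}\|P^{1/2}u\|^2$ shift is that, after this substitution, all the \emph{current} terms $\|P^{1/2}u\|^2$ cancel and, for $\alpha\in(0,1)$, what remains to be dominated is controlled by the budget $2\alpha\|\px u\|^2+(1-\alpha)(u^4,1)$ together with the strictly negative constant $4(1+\alpha)\tilde E(0)$; once $\beta$ is small this is where the negative--energy hypothesis enters.

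The main obstacle is absorbing the two convective leftovers $(P(u\px u),u)$ and the history term $\int_0^t(P(u\px u),\ps u)\,ds$. For the first I would use $u\px u=\tfrac12\px(u^2)$ together with \eqref{Po1}, which give the clean bounds $\|P^{1/2}(u\px u)\|=\tfrac12\|u^2-\l u^2\r\|\le\tfrac12(u^4,1)^{1/2}$ and $|(P(u\px u),u)|\le\tfrac12(u^4,1)^{1/2}\|P^{1/2}u\|$; combined with a Gagliardo--Nirenberg/Agmon estimate these are genuinely $L^4$--type, hence subordinate to the quartic potential $(u^4,1)$. This quartic domination is exactly why the method needs the potential exponent $p+2\ge4$, i.e. $p\ge2$, and the constant $\lambda=\tfrac12+3d_0^2$ (through \eqref{Po1}) is what sizes the shift so the absorption closes; the history term is the most delicate and is handled by a further Cauchy--Schwarz in time feeding back into $\int_0^t\|P^{1/2}\ps u\|^2$. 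Once \eqref{01} is established on $[0,T]$, Lemma \ref{Lev1} furnishes a finite $t_1$ with $\Psi(t)\to\infty$; choosing $T>t_1$ keeps the non--integral part of $\Psi$ bounded, so $\int_0^{t_1}\|P^{1/2}u\|^2\,ds=\infty$, and since $\ddt\|P^{1/2}u\|^2>0$ in this regime we conclude $\|P^{1/2}u(t)\|\to\infty$ as $t\to t_1^-$.
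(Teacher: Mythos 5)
There is a genuine gap, and it sits exactly where you flag the argument as ``most delicate''. The obstruction to running Levine's method here is that testing with $\Dt u$ gives
\[
\ddt\Bigl(\tfrac14(u^4,1)-\tfrac12\|\px u\|^2\Bigr)=\|P^{1/2}\Dt u\|^2+(P(u\px u),\Dt u),
\]
and the convective term can only be estimated by $\eb\|P^{1/2}\Dt u\|^2+\tfrac{d_0^2}{4\eb}(u^4,1)$. Integrating in time then leaves an uncontrolled history term $C\!\int_0^t(u^4,1)\,ds$ in the lower bound for $\int_0^t\|P^{1/2}\ps u\|^2\,ds$; this quartic time--integral is dominated neither by $\Psi$ nor by $\Psi'$ nor by any current quantity, so the concavity inequality \eqref{01} does not close. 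Your proposed fix --- ``a further Cauchy--Schwarz in time feeding back into $\int_0^t\|P^{1/2}\ps u\|^2$'' --- does not remove this term: Cauchy--Schwarz on $\int_0^t(P(u\px u),\ps u)\,ds$ is precisely what produces it. Likewise, the static shift $\tilde E=E+\tfrac{\lambda}{2}\|P^{1/2}u\|^2$ does not help, because $\ddt\bigl(\tfrac{\lambda}{2}\|P^{1/2}u\|^2\bigr)=\lambda(P\Dt u,u)$ has no sign and absorbs nothing. This is the point the paper makes explicitly: the convective equation cannot be written as $Pu_t+Au=F(u)$ with $F$ a gradient, so Levine's scheme fails as stated.

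The missing idea is the exponential change of variables $u=e^{\lambda t}v$. In the $v$--equation the potential term carries the weight $e^{2\lambda t}$, so the modified energy $E(t)=-\tfrac{\lambda}{2}\|P^{1/2}v\|^2-\tfrac12\|\px v\|^2+\tfrac14e^{2\lambda t}(v^4,1)$ satisfies
\[
\ddt E(t)=\|P^{1/2}\Dt v\|^2+\tfrac{\lambda}{2}e^{2\lambda t}(v^4,1)+e^{\lambda t}(P(v\px v),\Dt v),
\]
where the \emph{new} positive term $\tfrac{\lambda}{2}e^{2\lambda t}(v^4,1)$, generated by differentiating the exponential weight, absorbs the convective contribution pointwise in time once $\lambda\ge d_0^2/(2\eb_2)$. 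This restores the monotonicity $E(t)\ge E_0\ge 0$ with no leftover history term, after which the concavity argument for $\Psi(t)=\int_0^t\|P^{1/2}v\|^2\,d\tau+\tfrac12\|P^{1/2}u_0\|^2$ goes through (the paper does not even need your auxiliary terms $(T-t)\|P^{1/2}u_0\|^2$ and $\beta(t+\tau)^2$, since $\Psi'(0)=\|P^{1/2}u_0\|^2>0$ already). Your identities, your use of \eqref{Po1} to bound $\|P^{1/2}(v\px v)\|\le d_0\|v^2\|$, and your observation that the quartic potential must dominate the convective term (hence $p\ge2$) are all correct and match the paper; but without the exponential substitution the core inequality \eqref{01} is not established, so the proof as proposed does not go through. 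There is also a secondary tension in your parameter choices: closing the absorption wants $\beta$ small while $T>T_1=\Psi(0)/(\alpha\Psi'(0))$ with $\Psi(0)\sim T\|P^{1/2}u_0\|^2$ forces $\beta\tau$ large; this would need to be resolved explicitly, but it is moot once the main gap is addressed.
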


\begin{proof}
We make the change $u=e^{\lambda t} v$. Then the function $v$ solves the problem
\be\label{cc1}
\begin{cases}
P\Dt v+\lambda Pv-\px^2 v+e^{\lambda t}P(v\px v)=e^{2\lambda t}(v^3-\l v^3\r),\\
v\big|_{t=0}=u_0.
\end{cases}
 \ee
Let us consider the function \be\label{Psi}
\Psi(t):=\int_0^t\|P^{1/2}v(\tau)\|^2d\tau +C_0, \ee
 where $C_0$ is
some positive parameter to be chosen below. Clearly \be\label{cc2}
\Psi'(t)=\|P^{1/2}v(t)\|^2=2\int_0^t(P\p_{\tau}v,v)d\tau
+\|P^{1/2}u_0\|^2. \ee Employing the equation \eqref{cc1} we also
obtain that \be\label{cc3} \Psi''(t)=2(P\Dt v,v)=-2\lambda
\|P^{1/2}v\|^2-2\|\px v\|^2-2e^{\lambda t}(P(v\px v),v)+2e^{2\lambda
t}(v^4,1). \ee

By using the Cauchy inequality with $\eb$ and the Schwarz inequality
we obtain
\begin{multline*}
2e^{\lambda t}|(P(v\px v),v)|=2e^{\lambda t}|(P^{1/2}(v\px
v),P^{1/2}v)|\leq 2e^{\lambda t}\|P^{1/2}(v\px v)\|\|P^{1/2}v\|\leq\\
\leq 2d_0e^{\lambda t}\|v^2\|\|P^{1/2}v\|\leq \eb_1e^{2\lambda
t}(v^4,1)+\frac{d_0^2}{\eb_1}\|P^{1/2}v\|^2.
\end{multline*}

Thus \eqref{cc3} implies \be\label{cc4} \Psi''(t)\geq
-\left(2\lambda+\frac{d_0^2}{\eb_1}\right) \|P^{1/2}v\|^2-2\|\px
v\|^2+(2-\eb_1)e^{2\lambda t}(v^4,1). \ee

Multiplying the equation \eqref{cc1} by $\Dt v$ and integrating over
$(-L,L)$ we obtain the second main energy equality :

\be\label{cc5} \ddt E(t)= \|P^{1/2}\Dt v\|^2+\frac{\lambda}2
e^{2\lambda t}(v^4,1)+e^{\lambda t}(P(v\px v),\Dt v), \ee where
$$
E(t):=-\frac{\lambda}{2}\|P^{1/2} v\|^2-\frac12\|\px
v\|^2+\frac14e^{2\lambda t}(v^4,1).
$$

Let us estimate the last term on the right hand side of \eqref{cc5}
: \be\label{cc6} e^{\lambda t}|(P(v\px v),\Dt v)|\leq e^{\lambda
t}d_0\|v^2\|\|P^{1/2}\Dt v\|\leq \eb_2\|P^{1/2}\Dt
v\|^2+\frac{d_0^2}{4\eb_2}e^{2\lambda t}(v^4,1). \ee

If  \be\label{e2}\lambda \geq \frac{d_0^2}{2\eb_2}\ee
 then it
follows from \eqref{cc5} and \eqref{cc6} that
$$
\ddt E(t)\geq (1-\eb_2)\|P^{1/2}\Dt v\|^2.
$$

Hence

 \be\label{cc7} E(t)\geq
(1-\eb_2)\int_0^t\|P^{1/2}\partial_{\tau} v(\tau)\|^2d \tau+E_0. \ee
It follows from \eqref{cc4} that
$$
\Psi''(t)\geq4(2-\eb_1)E(t)+\left[2(1-\eb_1)\lambda-\frac{d_0^2}{\eb_1}\right]\|P^{1/2}v\|^2+(2-2\eb_1)\|\px
v\|^2.
$$
Let us take $\eb_1=\frac12$ (it is important to have
$4(2-\eb_1)>4$). Then we have the following estimate for
$\Psi''(t)$:
 \be\label{cc8}
\Psi''(t)\geq 6E(t)+(\lambda -2d_0^2)\Psi'(t)+\|\px v\|^2. \ee

Due to \eqref{cc7} and \eqref{cc8} with $\eb_2=\frac16$ (note that
for this $\eb_2$ the inequality \eqref{e2} is satisfied) we have:
\be\label{cc9} \Psi''(t)\geq 5\int_0^t\|P^{1/2}\partial_{\tau}
v(\tau)\|^2d \tau+(\lambda -2d_0^2)\Psi'(t). \ee Here we have used
the condition $E(0)=E_0\geq 0.$
 By using
\eqref{cc2}and \eqref{cc9} we obtain the following inequality
\begin{multline}\label{cc10}
\Psi''(t)\Psi(t)-\frac54\left(\Psi'(t)\right)^2\geq
5\left(\int_0^t\|P^{1/2}\partial_{\tau} v\|^2d \tau
\right)\left(\int_0^t\|P^{1/2}
v\|^2d \tau+C_0\right)+\\
-5\left(\int_0^t(P^{1/2}\partial_{\tau}v,P^{1/2}v)d\tau
+\frac12\|P^{1/2}u_0\|^2\right)^2+(\lambda -2d_0^2)\Psi\Psi'=\\
5\left[\left(\int_0^t\|P^{1/2}\partial_{\tau} v\|^2d \tau+C_0
\right)\Psi(t)-\left(\int_0^t(P^{1/2}\partial_{\tau}v,P^{1/2}v)d\tau
+\frac{\|P^{1/2}u_0\|^2}2\right)^2\right]+\\
(\lambda -2d_0^2)\Psi(t)\Psi'(t)-C_0\Psi(t).
\end{multline}
Let us choose in \eqref{cc10}
$$
C_0=\frac12\|P^{1/2}u_0\|^2.
$$
Then the  expression in the square brackets is nonnegative due to
Cauchy-Schwatz inequality. Therefore we have \be\label{cc9a}
\Psi''(t)\Psi(t)-\frac54\left(\Psi'(t)\right)^2\geq (\lambda
-2d_0^2)\Psi(t)\Psi'(t)-C_0\Psi(t). \ee According to \eqref{cc8} the
function $\Psi'(t)$ is a non-decreasing function. Thus
$$\Psi'(t)=\|P^{1/2}v(t)\|^2\geq \|P^{1/2}u_0\|^2.$$
Hence we obtain
from \eqref{cc9a}
$$
\Psi''(t)\Psi(t)-\frac54\left[\Psi'(t)\right]^2\geq (\lambda
-2d_0^2)\Psi(t)\|P^{1/2}u_0\|^2-C_0\Psi(t)\geq
[2(\lambda-2d_0^2)-1]C_0\Psi(t).
$$
Finally noting that $\lambda =\frac12+3d_0^2$ we obtain:
$$
\Psi''(t)\Psi(t)-\frac54\left[\Psi'(t)\right]^2\geq 0.
$$
Hence due to the Lemma \ref{Lev1} the
statement of the Theorem \ref{gn} holds true.
\end{proof}
\begin{remark} It is clear that Theorem \ref{gn} is true for
solutions of the equation (A) under the homogeneous Dirichlet's
boundary conditions.

\end{remark}
\begin{remark} It is easy to see that the result of the Theorem \ref{gn} remains true also for the multi-dimensional convective CH equation of the form
$$
\Dt u+\Delta(\Delta u +u^3)+u\vec{b}.\nabla u=0,
$$
where $\vec b\in L^\infty(\Omega,\R)$ is a given vector field and $u$ satisfies  the appropriate boundary conditions.

\end{remark}
\section{The convective CH equations: global existence}\label{s3}
In this section, we continue our study  of the convective CH
equation:
\begin{equation}\label{1.cch}
\Dt u+u\px u+\px^2(\px^2u+u|u|^p)=0
\end{equation}
on the interval $\Omega=[-1,1]$ (for simplicity, we take $L=1$ here) endowed with the periodic
conditions. We have seen in the previous section that this equation
possesses the blowing up in finite time solutions if $p\ge3$. The
aim of the present section is to show that the presence of the
convective term {\it prevents} the blow up if the exponent $p$ is
not large.

Namely, the following theorem is the main result of the section.
\begin{theorem}\label{Th.global} Let the exponent $0\le p<\frac49$. Then, for every $u_0\in L^2([-1,1])$ with zero mean, problem \eqref{1.cch}
possesses a unique solution defined for all $t\ge0$ and the following estimate holds:
\begin{equation}\label{CH-dis}
\|u(t)\|_{L^2}\le Q(\|u_0\|_{L^2})e^{-\alpha t}+C_*,
\end{equation}
where the positive constants $C_*$ and $\alpha$ and the monotone increasing function $Q$ are independent of $t$ and $u_0$.
\end{theorem}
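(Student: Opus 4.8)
The goal is a global a priori bound of dissipative type for the zero–mean solution $u(t)$ of \eqref{1.cch}. The natural energy to control is $\|u(t)\|_{L^2}^2 = \|u\|_H^2$, and the plan is to derive a differential inequality of exactly the form \eqref{Gron-par} and then invoke Lemma~\ref{GrZel}. Multiplying \eqref{1.cch} by $u$ and integrating over $\Omega$, the biharmonic term gives $\|\px^2 u\|^2$ (after using periodicity), the convective term $u\px u$ contributes $\int u^2\px u = \frac13\int\px(u^3)=0$ and so \emph{vanishes} at this level, while the dangerous concave term $\px^2(u|u|^p)$ integrated against $u$ produces, after two integrations by parts, $-\int \px^2 u\,\cdot u|u|^p$, which after further manipulation is an indefinite term of order $\int |\px u|^2 |u|^p$ that one must absorb. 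The first step is therefore to write
\be
\frac12\ddt\|u\|^2 + \|\px^2 u\|^2 = (\px^2(u|u|^p),u) = -(p+1)\int |u|^p |\px u|^2,
\ee
so that the bad term actually has a favorable sign here; the true difficulty is that this plain energy estimate does \emph{not} by itself yield the coercive control needed to close a dissipative inequality, because the convective term has dropped out and provides no dissipation.

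This is where the Goodman/Bronski--Gambill trick enters, and it is the heart of the argument. Rather than testing against $u$, I would test \eqref{1.cch} (or its equivalent form \eqref{cch-not}) against $u+\phi'$ for a carefully chosen background profile $\phi=\phi(x)$, equivalently estimating the modified energy $\frac12\|u+\px\psi\|^2$ for a suitable auxiliary function $\psi$. The point of the Goodman trick is that the convective term $u\px u$, when paired with the gradient of the background, yields a genuinely \emph{negative} definite contribution $-\int \phi'' u^2\,dx$ with $\phi''<0$ on most of the domain, thereby \emph{extracting} dissipation from the otherwise conservative transport term. Following \cite{BG}, the improved choice of $\phi$ makes the region where $\phi''$ has the wrong sign as small as possible, and this is precisely what is needed to beat the concave nonlinearity. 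The main obstacle, and the step I expect to require the most care, is the balancing of exponents: the concave term $u|u|^p$ must be controlled by the extracted dissipation $\int(-\phi'')u^2$ together with $\|\px^2 u\|^2$ via interpolation (Gagliardo--Nirenberg) inequalities, and it is exactly this balancing that forces the restriction $p<\tfrac49$.

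Concretely, after the Goodman trick I expect to arrive at a differential inequality for $\Psi(t):=\|u(t)\|^2$ of the shape
\be
\Psi'(t)+\eb\Psi(t)\le K\eb^{\alpha}[\Psi(t)]^{\beta}+M\eb^{-\gamma},
\ee
where the auxiliary function $\phi$ (hence the constants) is chosen depending on the \emph{size} of the solution, and the parameter $\eb$ measures the slope of the background profile. The exponents $\alpha,\beta,\gamma$ arising from the interpolation estimates will satisfy the structural condition \eqref{1.cond} precisely when $0\le p<\tfrac49$; verifying \eqref{1.cond} is the quantitative origin of the threshold. Once this inequality is in hand, Lemma~\ref{GrZel} delivers at once the dissipative estimate \eqref{CH-dis} with the stated $Q$, $\alpha$, and $C_*$. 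Uniqueness and local existence being already granted by Theorem~\ref{Th.local}, the global a priori bound automatically upgrades the local solution to a global one, completing the proof. The part I would flag as genuinely delicate is twofold: first, the correct construction of $\phi$ à la \cite{BG} so that the extracted dissipation dominates; and second, tracking the exponents through the interpolation so that the "Gronwall lemma with parameter" is applicable rather than merely formally stated.
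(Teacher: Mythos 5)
Your overall strategy is the paper's: extract dissipation from the convective term via a Goodman/Bronski--Gambill background profile, control the concave term by interpolation against $\|\px^2u\|^2_{L^2}$ and the extracted coercivity, and close with the Gronwall lemma with parameter (Lemma \ref{GrZel}), the condition \eqref{1.cond} producing the threshold $p<\frac49$. However, there are concrete problems with the proposal as written. First, your preliminary energy identity has the wrong sign: testing $\px^2(u|u|^p)$ against $u$ and moving it to the right-hand side gives $-(\px^2(u|u|^p),u)=+(p+1)\int|u|^p|\px u|^2\ge0$, a \emph{destabilizing} term, not a favorable one. This is not cosmetic: if the term really had a good sign, the plain $L^2$ estimate plus the Poincar\'e inequality would already give exponential decay for all $p$, the theorem would be trivial, and the blow-up result of Section \ref{s2} would be contradicted. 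The entire point is that the concave term fights the dissipation and must be beaten by the term $N\|u\|^2_{L^2}$ gained from Lemma \ref{Bronski}.

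Second, the quantitative core is asserted rather than derived. The paper gets $\Dt\|v\|^2_{L^2}+N\|u\|^2_{L^2}\le C\|u\|_{L^2}^{2\frac{3p+4}{4-p}}+CN^3$ from the interpolation $\|u\|_{L^{2(p+1)}}^{2(p+1)}\le C\|u\|_{L^2}^{\frac{3p+4}{2}}\|\px^2u\|_{L^2}^{\frac p2}$, then rescales time $t=N^2\tau$ and sets $\eb=1/N$ to land in the integrated form \eqref{Gron-par1} with $\alpha=2$, $\beta=\frac{3p+4}{4-p}$, $\gamma=1$; condition \eqref{1.cond} then reads $\frac{4p}{4-p}<\frac12$, i.e.\ $p<\frac49$. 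None of these exponents appear in your plan, so the claim that \eqref{1.cond} holds ``precisely when $p<\frac49$'' is unsupported. Third, Lemma \ref{Bronski} requires $u$ to vanish at a point, which is automatic for odd data but false in general; the paper first proves the theorem for odd solutions and then passes to general $u_0\in\dot H_{per}$ by minimizing over the circle of shifts $\phi_s$, exploiting the orthogonality $(u,\px\phi_{s(t)})=0$ at the minimizer and correcting by the constant $c(t)=u(t,s(t))$. Your plan with a single fixed background profile does not address this reduction, which is an essential step rather than a technicality.
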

\begin{proof} Since the local existence and uniqueness theorem for the equation \eqref{1.cch} equation is standard and immediate,
we only need to verify the dissipative estimate \eqref{CH-dis}.
As usual, we start with the case of odd periodic
solutions and use the following Lemma which is in fact proved in \cite{BG}
\begin{lemma}\label{Bronski} For every sufficiently large $N$ there exists a 2-periodic function $\phi$ with zero mean such that
\begin{equation}\label{1.phi}
\|\phi\|_{H^2}\le CN^{3/2},\ \ \|\phi\|_{L^\infty}\le CN
\end{equation}
with constant $C$ independent of $N$, such that, for every $u\in
H^2(\Omega)$ with $u(0)=0$,
\begin{equation}\label{1.estt}
\|u_{xx}\|^2_{L^2}-(\phi_x,|u|^2)\ge N\|u\|^2_{L^2}.
\end{equation}
\end{lemma}
The function $\phi$ with the desired properties  is constructed up to
 scaling in \cite{BG}. Indeed, it is proved there that, for any sufficiently large $L$,
  there exists a $2L$-periodic function $\psi\in H^2(-L,L)$ such that
$$
\|\psi\|_{H^2}\le CL^{3/2},\ \ \|\psi\|_{L^\infty}\le CL
$$
and, for any $u\in H^2(-L,L)$ with $u(0)=0$, the following
inequality is satisfied:
$$
\int_{-L}^L\left([\px^2u(x)]^2-u^2(x) \px \psi \right),dx\ge
\frac12\int_{-L}^Lu^2(x) dx.
$$
Scaling $x=Ly$, $\phi=L^3\psi$ and $N=CL^4$, we end up with
\eqref{1.estt}.
\par
We now return to  the key a priori estimate \eqref{CH-dis} for the {\it
odd} periodic solutions of \eqref{1.cch}. To this end, for any large
$N$, we multiply equation \eqref{1.cch} by $v:=u-\phi$ where
$\phi=\phi_N$ is constructed in Lemma \ref{Bronski}. Then, after
some transformations, we get
\begin{equation}\label{1.1}
\Dt \|v\|^2_{L^2}+2\|\px^2u\|^2_{L^2}-(\px\phi,|u|^2)=
-2(u|u|^p,\px^2u-\px^2\phi)+2(\px^2u,\px^2\phi).
\end{equation}
Using the Cauchy-Schwartz inequality together with \eqref{1.phi} and
\eqref{1.estt}, we get
\begin{equation}\label{1.2}
\Dt \|v\|^2_{L^2}+1/2\|\px^2u\|^2_{L^2}+N\|u\|^2_{L^2}\le
\|u\|^{2(p+1)}_{L^{2(p+1)}}+CN^3.
\end{equation}
Note also that, due to Lemma \ref{Bronski}, we see that, for every $q\in[1,\infty]$,
\begin{equation}\label{two-side}
\|u\|_{L^q}-CN\le \|v\|_{L^q}\le \|u\|_{L^q}+CN,
\end{equation}
where the constant $C$ is independent of $N$.

Applying the interpolation inequality
$$
\|u\|_{L^{2(p+1)}}^{2(p+1)}\le
C\|u\|_{L^2}^{\frac{3p+4}2}\|\px^2u\|^{\frac p2}_{L^2}\le
\frac12\|\px^2u\|^2_{L^2}+C\|u\|_{L^2}^{2\frac{3p+4}{4-p}}
$$
to estimate  the right-hand side of \eqref{1.2},
we end up with
\begin{equation}\label{1.3}
\Dt \|v\|^2_{L^2}+N\|u\|^2_{L^2}\le
C\|u\|_{L^2}^{2\frac{3p+4}{4-p}}+CN^3.
\end{equation}
In order to derive the desired dissipative estimate for $u$ from
\eqref{1.3}, we'll use the Lemma \ref{GrZel}.
To this end, we scale time
$t=N^2\tau$ and introduce $\eb=1/N$. Then,
\eqref{1.3} reads
$$
\partial_\tau\|v\|^2_{L^2}+\eb\|u\|^2_{L^2}\le C\eb^2\|u\|_{L^2}^{2\frac{3p+4}{4-p}}+C\eb^{-1}.
$$
Integrating this inequality in time, introducing $\Psi(t):=\|u(t)\|^2_{L^2}$ and using \eqref{two-side}, we
end up with
\begin{equation}\label{ggoodd}
\Psi(t)\le \Psi(0)e^{-\eb t}+C\eb^2\int_0^te^{-\eb(t-s)}[\Psi(s)]^{\frac{3p+4}{4-p}}\,ds+C\eb^{-2},
\end{equation}
where $C$ is independent of $\eb\to0$. Equation \eqref{ggoodd} ha the form of \eqref{Gron-par1} with
$\alpha=2$, $\beta=\frac{3p+4}{4-p}$ and $\gamma=1$ and the
condition \eqref{1.cond} reads
$$
\frac {4p}{4-p}<\frac12.
$$
This condition is i  satisfied if and only if $p<\frac49$. Thus, due to Lemma \ref{GrZel},
the {\it odd} solutions of \eqref{1.cch} cannot blow up in finite
time and satisfy the dissipative estimate \eqref{CH-dis} if $p<\frac49$, so in the particular case of odd initial data, Theorem \ref{Th.global} is proved.
\par
We are now ready to consider the general case of periodic $u\in \dot H_{per}$ with using the so-called Goodman trick, see \cite{G}.
 Namely, we consider a {\it circle} of shifted functions $\phi_s(x):=\phi(x+s)$, $s\in\R$ and introduce
$$
R(t):=\min_{s\in\R}\|u(t)-\phi_s\|^2.
$$
Then, due to \eqref{1.phi}, we have the analogue of \eqref{two-side}:
\begin{equation}\label{two-sided1}
\|u(t)\|^2_{L^2}-CN\le R(t)\le \|u(t)\|_{L^2}^2+CN
\end{equation}
and for the minimizer $\phi_{s(t)}$, we have
\begin{equation}\label{1.g}
(u(t),\partial_x\phi_{s(t)})\equiv0,
\end{equation}
and, at least formally (see \cite{G} for the justification),
\begin{multline*}
\frac12\frac d{dt}R(t)=\frac12\frac
d{dt}\|u(t)-\phi_{s(t)}\|^2_{L^2}=(\Dt
u(t),u(t)-\phi_{s(t)})-\\-s'(t)(\partial_x\phi_{s(t)},u(t)-\phi_{s(t)})=
  (\Dt u(t),u(t)-\phi_{s(t)}).
\end{multline*}
Thus, multiplying equation \eqref{1.cch} by $2(u(t)-\phi_{s(t)})$
and arguing as before, we get
\begin{equation}\label{1.good}
\frac d{dt}R(t)+3/2\|\px^2u\|^2_{L^2}+(\px\phi_{s(t)},|u|^2)\le
\|u\|^{2(p+1)}_{L^{2(p+1)}}+CN^3
\end{equation}
However, in contrast to the odd case, we cannot apply directly Lemma
\ref{Bronski} since the condition $u(t,s(t))=0$ is
 not necessarily satisfied. So, we need to introduce a time dependent "constant"  $c(t):=u(t,s(t))$
 and a function $w(t)=u(t)-c(t)$ for which the conditions of the Lemma \ref{Bronski} are satisfied, and we have
 $$
 \|\px^2w\|^2_{L^2}+(\partial_{x}\phi_{s(t)},|w|^2)\ge N\|w\|^2.
 $$
 Note that, due to the zero mean condition on $u$,
 $$
 \|w\|^2=\|u\|^2_{L^2}+|\Omega|c(t)^2\ge \|u\|^2
 $$
 and, due to the orthogonality condition \eqref{1.g},
 $$
 (\partial_x\phi_{s(t)},|w|^2)=(\partial_x\phi_{s(t)},|u|^2)-2c(t)(\partial_x\phi_{s(t)},u(t))+
 c(t)^2(\partial_x\phi_{s(t)},1)=(\partial_x\phi_{s(t)},|u|^2).
 $$
 Therefore,
 $$
 \|\px^2u\|^2_{L^2}+(\partial_{x}\phi_{s(t)},|u|^2)=
 \|\px^2w\|^2_{L^2}+(\partial_{x}\phi_{s(t)},|w|^2)\ge N\|u\|^2
 $$
 and \eqref{1.good} implies that
\begin{equation}\label{1.ggood}
\frac d{dt}R(t) + N\|u\|^2_{L^2}\le
C\|u\|_{L^2}^{2\frac{3p+4}{4-p}}+CN^3.
\end{equation}
Finally, integrating \eqref{1.ggood} in time, using \eqref{two-sided1} and arguing
 exactly as in the case of odd initial data, we derive
the desired dissipative estimate \eqref{CH-dis} for general $u_0\in\dot H_{per}$ and finish the proof of the theorem.
\end{proof}

\section{Related problems}\label{s4}
In this section, we apply the above considered methods to some equations which are, in a sense, close to the Kuramoto-Sivashinski
and Cahn-Hilliard equations, such as Kolmogorov-Sivashinski-Spiegel equations and
for the sixth order convective CH equations.
\subsection{Convective CH Equation vs
Kolmogorov-Sivashinsky-Spiegel equation}

We  consider now the problem \be\label{1cch}
\begin{cases}
\Dt u +\px^4u+\px^2\(2u-\delta u^3\)+u\px u=0,\\
u\big|_{t=0}=u_0
\end{cases}
\ee
in the domain $\Omega:=(-L,L)$ endowed by the periodic boundary conditions.
We are going to  show that the estimate obtained in \cite{FNT}
for the size of the absorbing ball can be improved at least for
small values of $\delta $ where one expects that the
Kuramato-Sivashinsky dynamics will dominate. Indeed, to the best of our knowledge, all previous methods of obtaining the dissipative estimates for this equation utilize only the dissipativity which comes from the cubic term ignoring the extra dissipation provided by the convective term. As a result, the obtained estimates were {\it divergent} as $\delta\to0$, see \cite{FNT,CFNT,EFNT,EK}. In particular, the radius of the absorbing ball in $\dot H_{per}$ constructed in \cite{CFNT} behaves like $L^3\delta^{-1/2}$ as $\delta\to0$.
\par
Using below the technique related to the Kuramoto-Sivashinski equation (analogous to what is used in Section \ref{s3}, we show that the radius of the absorbing ball remains bounded as $\delta\to0$. Since we are not interested in the dependence of this radius on $L$, we set $L=1$ for simplicity. Then, the following theorem holds.
\begin{theorem}\label{Th.something} Let $L=1$. Then, for every $u_0\in\dot H_{per}$, problem \eqref{1.cch} possess a unique solution and the following estimate holds:
\begin{equation}\label{l2}
\|u(t)\|_{L^2}^2\le C\|u_0\|^2e^{-\alpha t}+C_*
\end{equation}
where the positive constants $C$, $\alpha$ and $C_*$ are independent of $\delta\to0$.
\end{theorem}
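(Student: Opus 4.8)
The plan is to reproduce the scheme of Theorem \ref{Th.global}, the decisive point being that the dissipation is extracted from the convective term $u\px u$ through the background function of Lemma \ref{Bronski}, and \emph{not} from the cubic term $-\delta\px^2(u^3)$. This is exactly what forces every constant to be independent of $\delta$: the cubic term is treated as a perturbation whose favourable-signed dissipation is used only to absorb an error, rather than as the principal stabilising mechanism (which is what produced the $\delta$-divergent bounds in earlier work). Local well-posedness being standard, I would, as in Section \ref{s3}, first prove \eqref{l2} for odd periodic solutions, where Lemma \ref{Bronski} applies to a fixed background $\phi=\phi_N$, and then pass to general $u\in\dot H_{per}$ by the Goodman trick, with the circle of shifts $\phi_s$, the moving minimiser $\phi_{s(t)}$ and the auxiliary constant $c(t)=u(t,s(t))$, exactly as in the proof of Theorem \ref{Th.global}.

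For the odd case I would multiply \eqref{1cch} by $2v:=2(u-\phi)$ and integrate over $\Omega$. The biharmonic term yields $2\|\px^2u\|^2$ plus a cross term $2(\px^2u,\px^2\phi)$; the anti-diffusion $2\px^2u$ yields the \emph{destabilising} $-4\|\px u\|^2$ plus $4(u,\px^2\phi)$; the cubic term yields the good-signed dissipation $6\delta\int_\Omega u^2(\px u)^2\,dx\ge0$ plus a cross term $2\delta(u^3,\px^2\phi)$; and the convective term yields $-(\px\phi,|u|^2)$, the sign of $\phi$ being chosen so that \eqref{1.estt} is directly applicable. Splitting $2\|\px^2u\|^2-(\px\phi,|u|^2)=\|\px^2u\|^2+\big[\|\px^2u\|^2-(\px\phi,|u|^2)\big]$ and using \eqref{1.estt} turns the left-hand side into a lower bound containing $\|\px^2u\|^2+N\|u\|^2$. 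The term $-4\|\px u\|^2$ is then harmless: the interpolation $\|\px u\|^2\le\tfrac1{16}\|\px^2u\|^2+C\|u\|^2$ absorbs it into $\|\px^2u\|^2$ and, since $N$ will be taken large, into $N\|u\|^2$. The linear cross terms $2(\px^2u,\px^2\phi)$ and $4(u,\px^2\phi)$ are estimated by Cauchy--Schwarz and \eqref{1.phi} and cost at most $\tfrac14\|\px^2u\|^2+\tfrac14N\|u\|^2+CN^3$.

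The crux, and the step I expect to be the main obstacle, is to absorb the cubic cross term $\delta(u^3,\px^2\phi)$ with a constant that does not degenerate as $\delta\to0$. Writing $(u^3,\px^2\phi)=-3\int_\Omega u^2\,\px u\,\px\phi\,dx$ and applying the weighted Cauchy--Schwarz inequality, one bounds $2\delta|(u^3,\px^2\phi)|$ by $3\delta\int_\Omega u^2(\px u)^2\,dx+C\delta\|\px\phi\|_{L^\infty}^2\|u\|^2$. The first summand is swallowed by half of the favourable cubic dissipation $6\delta\int_\Omega u^2(\px u)^2\,dx$; and since $\|\px\phi\|_{L^\infty}^2\le C\|\phi\|_{H^2}^2\le CN^3$ by \eqref{1.phi}, the surviving error is at most $C\delta N^3\|u\|^2$. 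The key point is that $N$ is fixed as a \emph{large constant independent of $\delta$} (large enough for the anti-diffusion to be dominated), after which $C\delta N^3\le\tfrac14N$ holds for all sufficiently small $\delta$, and this error is absorbed into the convective dissipation $N\|u\|^2$. Because the cubic enters with the favourable sign, no superlinear term in $\|u\|^2$ survives, and one is left with $\Dt\|v\|^2+cN\|u\|^2\le CN^3$, where $c,C$ depend only on the fixed $N$. Using $\|u\|^2\ge\tfrac12\|v\|^2-CN^2$ and $\|v\|^2\le 2\|u\|^2+CN^2$ (both from $\|\phi\|_{L^2}\le CN$), an elementary linear Gronwall argument — here we do not even need the full strength of Lemma \ref{GrZel}, since the right-hand side is constant — gives $\|u(t)\|^2\le C\|u_0\|^2e^{-\alpha t}+CN^2$ with $\alpha\sim N$ and $C_*=CN^2$ both independent of $\delta$. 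Passing to general periodic data by the Goodman trick as in Theorem \ref{Th.global}, and noting that $\|\px^2\phi_s\|$ and $\|\px\phi_s\|_{L^\infty}$ are shift-invariant so that every estimate above carries over verbatim, then establishes \eqref{l2}.
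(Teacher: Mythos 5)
Your proposal is correct and follows essentially the same route as the paper: odd case first via Lemma \ref{Bronski} with a fixed $N$ independent of $\delta$, the cubic cross term absorbed into the good-signed dissipation $6\delta(u^2,(\px u)^2)$ at the cost of an error $C\delta N^3\|u\|^2$ which is harmless for $\delta N^3\le 1$, an elementary Gronwall argument, and the Goodman trick for general periodic data. No gaps.
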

\begin{proof} The existence and uniqueness for that equation is well-known, see e.g., \cite{EK}, so we only derive the uniform estimate \eqref{l2}. Analogously to Section \ref{s3}, we start with the case of odd initial data $u_0$ and multiply \eqref{1cch} by $v=u-\varphi$ where $\varphi=\varphi_N$ is the same as in Lemma \ref{Bronski} and $N$ will be fixed below. Then, after the obvious transformations, we have
\begin{multline}\label{1.1.1}
\Dt \|v\|^2_{L^2}+2\|\px^2u\|^2_{L^2}-(\px\phi,|u|^2)+6\delta(u^2,|\partial_x u|^2)=\\=
6\delta(u^2\px u,\px\phi)+4\|\px u\|^2_{L^2}-4(\px u,\px\varphi)+2(\px^2u,\px^2\phi).
\end{multline}
We estimate the first term in the right-hand side of \eqref{1.1.1} via the Cauchy-Schwartz inequality:
\begin{multline}
6\delta(u^2\px u,\px\phi)=6\delta(u\px u,u\px\phi)\le 6\delta(u^2,|\px u|^2)+6\delta(|\px \phi|^2,u^2)\le\\\le 6\delta(u^2,|\px u|^2)+CN^3\delta\|u\|^2_{L^2},
\end{multline}
where we have used that $\|\px\phi\|_{L^\infty}\le C\|\phi\|_{H^2}\le CN^{3/2}$. The rest terms in the right-hand side can be estimated in a standard way using the interpolation inequality $\|w\|_{H^1}\le C\|w\|_{H^2}^{1/2}\|w\|_{L^2}^{1/2}$ and the Cauchy-Schwartz inequality. Therefore, using again \eqref{1.phi}, we see that for sufficiently large $N$,
\begin{equation}\label{stupid}
\Dt \|v\|^2_{L^2}+(\beta N-\alpha(1+\delta N^3))\|u\|^2_{L^2}\le
CN^3.
\end{equation}
where the positive constants $\beta$, $\alpha$ and $C$ are independent of $\delta$ and $N$. Finally, fixing $N$ such that $\beta N\ge 3\alpha$, we see that, for $\delta$ being small enough that $\delta N^3\le1$, the following inequality holds:
$$
\Dt \|v\|^2_{L^2}+\alpha\|u\|^2_{L^2}\le CN^3
$$
and the Gronwall inequality applied to it gives the desired uniform estimate \eqref{l2}. Thus, the theorem is proved in the particular case of odd initial data. The general case can be reduced to
that particular one using the Goodman trick again, exactly as in Section \ref{s3}. So, the theorem is proved.
\end{proof}
\begin{remark} It looks natural to consider the mixture of problems \eqref{1.cch} and \eqref{1cch}, namely,
$$
\Dt u+uu_x+(u_{xx}+u|u|^p-\delta u^3)_{xx}=0
$$
with $p<\frac49$ and study the limit $\delta\to0$. However, the above method does not work at least directly
in this case (at least without stronger assumptions on $p$). Indeed, as we see from \eqref{stupid}, if we allow $N$
to be dependent on the norm of the initial data (as in Section \ref{s3}), we also need to decrease $\delta$ in
the dependence on the initial data. Thus, the problem of obtaining the uniform with respect to $\delta\to0$ estimates for that case remains open.
\end{remark}

\subsection{Sixth order convective  Cahn - Hilliard equation}

 We consider here the following sixth order convective Cahn-Hilliard equation
\be\label{sx1}\begin{cases} \Dt
u -\px^4(\px^{2}u+u-u^3)+ u\px u=0,\\
u\big|_{t=0}=u_0
\end{cases}
 \ee
on a bounded interval $\Omega=(-L,L)$.
 This equation was
derived in \cite{SGND} as a model of process of growing crystalline
surface with small slopes that undergoes faceting
where $u=\px h$ is the slope of a surface $h(x,t)$. In \cite{KR} the
authors proved existence and uniqueness of the global solution to
initial boundary value problem for \eqref{sx1} under periodic
boundary conditions.\\
We are going to show that the semigroup generated by the initial
boundary value problem for \eqref{sx1} under the {\it periodic} boundary conditions is dissipative (the case of, say,
Dirichlet boundary conditions can be treated similarly), namely, the following proposition holds.
\begin{proposition}\label{abb1} For any $u_0\in\dot H^{-1}_{per}$, problem
\eqref{sx1} possesses a unique solution $u\in C(\R_+,\dot H^{-1}_{per})\cap L^2_{loc}(\R_+,\dot H^2_{per})$
and the following dissipative estimate holds:
\begin{equation}\label{disdis}
\|u(t)\|_{\dot H^{-1}_{per}}\le Q(\|u_0\|_{\dot H^{-1}_{per}})e^{-\alpha t}+C_*,
\end{equation}
for the positive constants  $\alpha$ and $C_*$ and monotone function $Q$ which are independent of $t$ and $u_0$.
\end{proposition}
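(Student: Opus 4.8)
The plan is to mirror the scheme of Section \ref{s3} and of the proof of Theorem \ref{Th.something}, transplanting it to the space $\dot H^{-1}_{per}$, which is the natural phase space for this sixth order equation. Since local existence and uniqueness are known (see \cite{KR}), I would only establish the a priori dissipative estimate \eqref{disdis}. The first step is to apply $P=(-\px^2)^{-1}$ to \eqref{sx1}, which puts it in the Kuramoto--Sivashinsky--type form $P\Dt u+\px^4u+\px^2u-\px^2(u^3)+P(u\px u)=0$, and to use the energy $\|u\|_{\dot H^{-1}_{per}}^2=(Pu,u)$. The reason for working in this norm rather than in $L^2$ is that here the cubic term produces the manifestly nonnegative dissipation $3(u^2,|\px u|^2)$ (in $L^2$ it would produce an indefinite contribution), while the linear part gives the familiar competition $\|\px^2u\|^2-\|\px u\|^2$.

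As in Section \ref{s3}, I would treat odd initial data first and reduce the general periodic case to it by the Goodman trick over the circle of shifts $\phi_s(x):=\phi(x+s)$, setting $R(t):=\min_{s}\|u(t)-\phi_s\|_{\dot H^{-1}_{per}}^2$ and using the orthogonality satisfied at the minimizer. Testing the equation against $Pv$ with $v=u-\phi_{s(t)}$ should yield an identity of the form $\tfrac12\tfrac{d}{dt}R+\big(\|\px^2u\|^2+\tfrac12(u^2,\px P\phi_s)\big)+3(u^2,|\px u|^2)=\|\px u\|^2-(u\px u,Pu)+(\text{cross terms in }\phi)$. The destabilizing term $\|\px u\|^2$ is subordinate to the dissipation by interpolation, $\|\px u\|^2\le\eb\|\px^2u\|^2+C_\eb\|u\|_{\dot H^{-1}_{per}}^2$ (as $\dot H^1$ lies between $\dot H^{-1}$ and $\dot H^2$); the residual $C_\eb\|u\|_{\dot H^{-1}_{per}}^2$, which alone would only give exponential growth, is defeated by the grouped term $\|\px^2u\|^2+\tfrac12(u^2,\px P\phi_s)$, which a Bronski--type lower bound as in Lemma \ref{Bronski} renders $\gtrsim N\|u\|_{\dot H^{-1}_{per}}^2$ for $N$ large. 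The cross terms in $\phi$ contribute, through Cauchy--Schwartz and \eqref{1.phi}, a constant depending only on $N$, while the cubic cross term $3(u^2\px u,\px\phi_s)$ is absorbed into the good dissipation $3(u^2,|\px u|^2)$ exactly as the term $6\delta(u^2\px u,\px\phi)$ is handled in the proof of Theorem \ref{Th.something}.

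The genuinely new difficulty, and the step I expect to be the main obstacle, is the convective remainder $(u\px u,Pu)$. In the $L^2$ energy one has the cancellation $(u\px u,u)=0$, but in $\dot H^{-1}_{per}$ it is replaced by $(u\px u,Pu)=-\tfrac12(u^2,\px Pu)\ne0$; writing $g:=\px Pu$, so that $\px g=-u$, this equals $-\tfrac14(g^2,\px u)$, and I would control it by the smoothing of $P$ together with interpolation, absorbing it into $\eb\|\px^2u\|^2+3(u^2,|\px u|^2)$ at the price of a higher power of $\|u\|_{\dot H^{-1}_{per}}^2$ on the right-hand side. A second, more technical point is that the stabilizing weight appearing above is $\px P\phi_s$ rather than $\px\phi_s$, so the background function of \cite{BG} must be re-chosen (e.g.\ replacing $\phi$ by an appropriate $\px^2\psi$) and the inequality of Lemma \ref{Bronski} re-derived with this $P$-weight and with the correct growth of the norms in $N$. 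Once these points are settled, the integrated inequality has exactly the form \eqref{Gron-par1}, so Lemma \ref{GrZel}---whose hypothesis \eqref{1.cond} would then have to be verified---or, should the absorption keep the remainder quadratic, the ordinary Gronwall inequality, yields \eqref{disdis} for odd data; the general case then follows verbatim from the Goodman trick argument of Section \ref{s3}.
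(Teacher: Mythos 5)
Your outline transplants the whole Section~\ref{s3} machinery (Bronski--Gambill background function, Goodman trick, Gronwall lemma with parameter) to the sixth order equation, but the two steps on which your argument actually hinges are precisely the ones you defer: (i) a version of Lemma~\ref{Bronski} in which the weight $\px\phi$ is replaced by $\px P\phi_s$ (equivalently, a background function of the form $\px^2\psi$ with the correct $N$-growth of its norms), and (ii) the exponent produced when the convective remainder $(u\px u,Pu)$ is absorbed ``at the price of a higher power of $\|u\|^2_{\dot H^{-1}_{per}}$'', without which condition \eqref{1.cond} of Lemma~\ref{GrZel} cannot be checked. Neither is routine: the paper itself remarks (Remark~\ref{6blow}) that the analogue of the sharp Lemma~\ref{Bronski} needed for the sixth order operator is \emph{not} presented there, and it is only needed for the genuinely unstable nonlinearity $+u|u|^p$ of \eqref{usx15}. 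As written, your proposal is therefore an (incomplete) attack on the harder problem of Remark~\ref{6blow}, not a proof of Proposition~\ref{abb1}.

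What you missed is that for \eqref{sx1} the cubic term has the \emph{stable} sign: after applying $P$ one gets $P\Dt u+\px^2(\px^2u+u-u^3)+P(u\px u)=0$, and testing with $u$ produces the coercive term $+3(u^2,(\px u)^2)$ on the dissipative side, exactly as you note. The paper's proof then closes in three lines without any background flow: the only destabilizing linear term $-\|\px u\|^2$ is absorbed by $\|\px v\|^2\le\eb_1\|\px^2v\|^2+C_{\eb_1}\|P^{1/2}v\|^2$; the convective term is bounded by
\begin{equation*}
\bigl|\bigl(P(u\px u),u\bigr)\bigr|\le d_0\|u^2\|\,\|P^{1/2}u\|\le \eb_2\|u^2\|^2+C_{\eb_2}\|P^{1/2}u\|^2 ;
\end{equation*}
and both $\|u^2\|^2$ and $\|P^{1/2}u\|^2$ are dominated, up to additive constants, by the cubic dissipation via $\|u^2\|^2\le 4d_0^2\,(u^2,(\px u)^2)$. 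This yields $\frac12\frac d{dt}\|P^{1/2}u\|^2+\frac12\|\px^2u\|^2\le K_0$, and the ordinary Gronwall inequality (with Poincar\'e) gives \eqref{disdis}. So no Goodman trick, no Lemma~\ref{Bronski}, and no Lemma~\ref{GrZel} are needed here; the convective term is harmless rather than helpful. Your odd/general-data splitting and the whole background-flow apparatus should be dropped for this proposition and reserved for the unstable variant \eqref{usx15}.
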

\begin{proof}
Applying to both sides of \eqref{sx1} the operator $P=(-\px^2)^{-1}$ we get an
equivalent problem \be\label{sxE}
\begin{cases}
P\Dt u +\px^2(\px^2 u+u-u^3)-P(u\px u)=0,\\
u\big|_{t=0}=u_0.
\end{cases}
\ee
Multiplying the equation \eqref{sxE} by $u$ and integrating over
$(0,L)$ we obtain
$$
\frac12 \frac d{dt}\|P^{\frac12}u\|^2-\left( P^{\frac12}(u\px
u),P^{\frac12}u\right)+\|\px^2u\|^2 - \|\px u\|^2+3(u^2,(\px
u)^2)=0.
$$
Employing the inequality \eqref{Po1} and the interpolation
inequality
$$
\|\px v\|^2 \leq \eb_1\|P^{\frac12}u\| + C_1\|\px^2 v\|
$$
which is valid for each $v \in \dot H^2_{per}$, we obtain

\begin{multline}\label{sx4} \frac12\frac d{dt}\|P^{\frac12}u\|^2 +
(1-\eb_1)\|\px^2u\|^2+ 3(u^2,(\px u)^2)\leq
d_0\|u^2\|\|P^{\frac12}u\|+ C_1\|P^{\frac12}u\|^2 \le\\
\eb_2\|u^2\|^2+(C_1+C_2)\|P^{\frac12}u\|^2 \le 2 \eb_2\|u^2\|^2 +
\frac{[d_0(C_1+C_2)]^2}{2\eb_2}L.
\end{multline}
Finally we use the inequality (which follows from \eqref{Po1})
$$
\|u^2\|^2\leq 4d_0^2\left(u^2,(\px u)^2\right)
$$

and obtain from \eqref{sx4} the following estimate:
\be\label{sx5}
\frac12\frac d{dt}\|P^{\frac12}u\|^2 + (1-\eb_1)\|\px^2u\|^2+
(3-8\eb_2d_0^2)\left(u^2,(\px u)^2\right)\leq K_0.
\ee
 By choosing
$\eb_1=\frac12$ and $\eb_2=\frac3{8d_0^2}$ and applying the Gronwal inequality, we deduce \eqref{disdis} and finish the proof of the proposition.
\end{proof}
\begin{remark}\label{6reg} Using the standard parabolic regularity, one may show that the solution $u(t)$ constructed in Proposition \ref{abb1} becomes $C^\infty$ (and even Gevrey) regular for all $t>0$.
\end{remark}
\begin{remark}\label{6blow} By using the same arguments as in the proof of the Theorem \ref{gn}
we can show that a wide class of solutions to the initial boundary value problem for the sixth order  convective CH equations with concave potential
\be\label{usx1} \Dt
u -\px^4(\px^{2}u+u+u^3)+ u\px u=0.
 \ee
blow up in a finite time.\\
We would like also note that the blow up theorem for  sixth order unstable CH equations \eqref{usx1} {\it without} the convective term
can be established by using the concavity method of Levine since in this situation the equation can be written in the form \eqref{DO}.
\par
Finally, arguing as in the proof of Theorem \ref{Th.global}, one can show the global existence and dissipativity of solutions of the following problem:
\be\label{usx15} \Dt
u -\px^4(\px^{2}u+u+u|u|^p)+ u\px u=0
 \ee
 if $p<p_0$ for some exponent $p_0>0$, so the presence of the convective term prevents blow up in that situation as well. However, in order
 to compute the exponent $p_0$, we need the analog of the sharp Lemma \ref{Bronski} for the six order operator which we do not present here.
\end{remark}


\end{document}